\theoremstyle{plain}
\newtheorem{theorem}{Theorem}[section]
\newtheorem*{theorem1}{Theorem}
\theoremstyle{definition}
\theoremstyle{remark}
\begin{document}



\title{On the uniqueness of a solution to a stationary convection-diffusion equation with a generalized divergence-free drift}
\author{M.D. Surnachev\thanks{Email: peitsche@yandex.ru}\\ {\small Computational Aeroacoustcs Laboratory, Keldysh Institute of Applied Mathematics RAS} \\ {\small Miusskaya Sq. 4, Moscow 125047, Russia}}
\date{}
\maketitle
\vspace{-24pt}
\begin{abstract}
Let $A$ be a skew-symmetric matrix in $L^2(\Omega)$, $\Omega$ --- a bounded Lipschitz domain in $\mathbb{R}^n$, $n\geq 2$. The Dirichlet problem $ -\mathrm{div}\, (\nabla u+A\nabla u)=f$, $u\in H_0^1(\Omega)$, $f\in W^{-1,2}(\Omega)$ has at least one solution obtained by approximating $A$ and passing to the limit. In 2004 V.V. Zhikov  constructed an example of nonuniqueness. In the same paper he proved the uniqueness of solutions if the $L^p(\Omega)$ norms of $A$ are $o(p)$ as $p$ goes to infinity. We prove the uniqueness of solutions if $\exp (\gamma |A|)\in L^1(\Omega)$ for some $\gamma>0$, which generalizes Zhikov's theorem.\\
\textbf{Keywords}: uniqueness; generalized drift; BMO; Morrey space. \textbf{MSC2010}: 35J15.
\end{abstract}



\begin{flushright}
{\it Dedicated to the memory of Academician V.I. Smirnov,}\\
{\it One of the Founding Fathers of MathPhys in Russia}
\end{flushright}

\section{Introduction}

Let $\Omega$ be a bounded Lipschitz domain in $\mathbb{R}^n$, $n\geq 2$, $f$ an element of  $W^{-1,2}(\Omega)$ and $A$ a skew-symmetric matrix from $L^2(\Omega)$. In this paper we are concerned with the question of uniqueness of solutions to the Dirichlet problem
\begin{equation}\label{Eq1}
Lu=-\mathrm{div}\, \left(\nabla u +A\nabla u\right)=f, \quad u\in W_0^{1,2}(\Omega).
\end{equation}
By a solution we mean a function $u\in W_0^{1,2}(\Omega)$ such that the integral identity
\begin{equation}\label{IntId}
\int_\Omega \left(\nabla u+A\nabla u \right)\nabla \varphi\, dx=(f,\varphi)
\end{equation}
holds for any $\varphi\in C_0^\infty(\Omega)$. 

Let us elucidate the term ``generalized drift'' in the paper title. Formally,
$$
(A_{ij}u_{x_j})_{x_i}=A_{ij}u_{x_i x_j}+u_{x_j} A_{ij,x_i}= (uA_{ij,x_i})_{x_j}-u A_{ij, x_i x_j}=(uA_{ij,x_i})_{x_j}.
$$
Here and below we use the Einstein convention of summation over repeated indices. For scalar and vector functional spaces we use the same notation, i.e. for $A:\Omega\to \mathbb{R}^{n\times n}$ we write $A\in L^2(\Omega)$ instead of $A \in L^2(\Omega)^{n\times n}$, for a vector field $a:\Omega\to \mathbb{R}^n$ we write $a\in L^2(\Omega)$ instead of $a\in L^2(\Omega)^n$ etc. 

More rigorously, if $A \in W^{1,2}(\Omega)$  and $\varphi \in C_0^\infty(\Omega)$,
\begin{gather*} 
\langle-\mathrm{div}\,(A\nabla u), \varphi \rangle=\int_\Omega A_{ij}u_{x_j}\varphi_{x_i}\, dx=-\int_\Omega A_{ij,x_j}u\varphi_{x_i}\, dx-\int_\Omega uA_{ij}\varphi_{x_ix_j}\, dx\\
=\int_\Omega A_{ji,x_j}u\varphi_{x_i}\, dx= \int_\Omega (u\,\mathrm{div}\, A) \nabla \varphi\, dx= \langle -\mathrm{div}\, (u\,\mathrm{div}\,A), \varphi \rangle,
\end{gather*}
where $(\mathrm{div} A)_i =A_{ji,x_j}$. Since $A$ is skew-symmetric, $\mathrm{div}\, \mathrm{div}\, A=A_{ij,x_ix_j}=0$. Thus, for a skew-symmetric $A\in W_0^{1,2}(\Omega)$ the Dirichlet problem \eqref{Eq1} can be written in the form 
\begin{equation}\label{Eq2}
-\mathrm{div}\, (\nabla u+a\nabla u)=f, \quad u\in W_0^{1,2}(\Omega), \quad f\in W^{-1,2}(\Omega),
\end{equation}
with the solenoidal vector field $a=\mathrm{div}\, A$ ($a_i= A_{ji,x_j}$). For nonsmooth $A$ one can say \cite{FP}, \cite{Z97}  that \eqref{Eq1} describes ``diffusion in a turbulent flow'' (in our case, stationary) since the flow velocity $a=\mathrm{div}\, A$ exists only in the sense of distributions. A similar class of equations in ``generalized divergence form'' was studied in  \cite{Osada}.

On the other hand, given a (smooth) solenoidal vector field $a$ we can construct (at least, locally) a skew-symmetric matrix $A$ such that $a=\mathrm{div}\, A$. Indeed, solenoidal $a$ corresponds to the closed $(n-1)$ form $\omega=\ast (a_i dx^i)$ ($\ast$ --- the Hodge star operator). By the Poincar\'e lemma (for instance, \cite{Spivak}) it is also exact, $\omega=d\alpha$ for $(n-2)$ form $\alpha$, provided that $\Omega$ is star-shaped (or contractible to a point, or diffeomorphic to a ball). The coefficients of $\alpha$ give the coefficients of $A$. In the language of differential forms, the passage between \eqref{Eq1} and \eqref{Eq2} is equivalent to the relation $\int_{\partial D} u d\alpha =(-1)^{n-1}\int_{\partial D} \alpha \wedge du$, $D$ a subdomain of $\Omega$, which follows from $d( ud\alpha+(-1)^n \alpha\wedge du)=0$. 

The form $\alpha$ can be additionally normed by $\delta\alpha=0$ ($\delta$ --- codifferential), and sought in the form $\alpha=\delta \beta$, which eventually leads to the problem $(d\delta +\delta d)\beta=\omega$ with suitable boundary conditions. For the problem $a=\mathrm{div}\,A$ the condition $\delta\alpha=0$ is equivalent to $A_{ij,x_k}+A_{jk,x_i}+A_{ki,x_j}=0$, which is neccesary for the representation of $A$ in the form of the rotor of a vector field $V$, i.e. $A_{ij}=\mathrm{curl}_{ij} V=V_{i,x_j}-V_{j,x_i}$. More on the Hodge decomposition for differential forms can be found in the famous Morrey's monography \cite[Chapter 7]{Morrey}. A rather complete theory of differential forms on Lipschitz domain was constructed in \cite{MMS} in the framework of Besov spaces.  

 In dimension $2$ this reduces to
$$
A=\begin{pmatrix}
0 & \alpha\\
-\alpha &0
\end{pmatrix}, \quad \mathrm{div}\, A= \left(-\alpha_y, \alpha_x\right)=(a_1,a_2). 
$$ 
Since $a$ is solenoidal the vector field $V=(a_2,-a_1)$ is potential. So one needs to find a function $\alpha$ with the given gradient $V$. In other words $a=\nabla^\perp \alpha$. 

In dimension $3$, any skew-symmetric matrix can be represented as $Ax=w\times x$,
and the problem of finding $A$ such that $a=\mathrm{div}\, A$ reads as $\mathrm{rot}\, w=a$, which is also easy to see from 
\begin{equation}\label{R1}
\mathrm{div}\, (w\times \nabla u)=\nabla u \cdot\mathrm{rot}\, w-w\cdot\mathrm{rot}\, \nabla u=\mathrm{div}\, (u\,\mathrm{rot}\, w).
\end{equation}
The problem of finding a vector field with prescribed rotor (and divergence) is a classical problem of vector calculus. For $\Omega=\{|x|<R\}$ one of solutions obtained by the Poincar\'e lemma is $w(x)=\int_0^1 a(tx)\times tx\, dt$. 

If $\Omega=\mathbb{R}^n$ and the solenoidal vector field $a$ is vanishing at infinity, a solution to $\mathrm{div}\, A=a$ can be obtained as the curl of the newtonian potential of $a$:
\begin{equation}\label{FR1}
A_{ij}=V_{i,x_j}-V_{j,x_i}, \quad V_i(x)=(n(n-2)\omega_n)^{-1}\int_{\mathbb{R}^n} a_i(y) |x-y|^{2-n}\, dy,\quad n\geq 3,
\end{equation}
where $\omega_n$ is the volume of the unit ball in $\mathbb{R}^n$, with the obvious modification for $n=2$. If $\Omega$ is a bounded domain and the normal component of $a$ on the boundary of $\Omega$ is equal to zero, then a solution to $\mathrm{div}\, A=a$ is given by the same formula \eqref{FR1}, where $a$ is extended by zero outside $\Omega$ (this extension is also solenoidal). 

In dimension $3$ formula \eqref{FR1} represents the standard vector calculus solution to $\mathrm{rot}\, w=a$ defined as $w= \mathrm{rot}\, (4\pi)^{-1}\int_{\mathbb{R}^n} a(y) |x-y|^{-1}\, dy$, which follows from representing $w=\mathrm{rot}\, v$ and using the vector calculus identity $\mathrm{rot}\, \mathrm{rot}\, v=\nabla\mathrm{div}\,v-\triangle v$. Such representation is of course only possible under the condition $\mathrm{div}\, w=0$, which is equivalent to requiring $\delta \alpha=0$ above.

 If the normal component of $a$ on the boundary is not equal to zero, one can continue $a$ to a sufficiently large ball $B$ which contains $\Omega$ by solving the auxilliary Neumann problem $-\triangle u=0$ in $B\setminus \Omega$,  $\partial u/\partial n=a\cdot n$ on $\partial \Omega$, $\partial u/\partial n=0$ on $\partial B$ ($n$ --- the exterior unit normal to $B\setminus \Omega$). Then one sets $a=\nabla u$ in $B\setminus \Omega$, $a=0$ in $\mathbb{R}^n\setminus B$, and a solution to $\mathrm{div}\, A=a$ is given by \eqref{FR1}. This construction assumes that either $\Omega$ does not have holes, or the flow of $a$ across the boundary of each hole is zero. 

If $\Omega$ has holes, the representation $a=\mathrm{div}\, A$ is obviously not always possible, but by the Hodge (Weyl in 3D) theorem there exists a harmonic (irrotational solenoidal) vector field $b$ such that $a=b+\mathrm{div}\, A$. For instance, one can take $b=\sum_j c_j \nabla \Gamma(x-x_j)$ where $x_j$ is a point inside the $j$-th hole, $\Gamma$ is the fundamental solution of the Laplace, and the constants $c_j$ are chosen to balance the flux of $a$ across the boundary of the corresponding hole. In detail this construction is discussed in \cite{KMPT}.

Another way is to directly solve the problem $-\triangle V=a$ in $\Omega$, $V\times n=0$ and  $\mathrm{div}\, V =0$ on $\partial \Omega$, and find $A=\mathrm{curl}\, V$. Regarding the equation $\mathrm{rot}\, u=f$ and corresponding boundary value problems see \cite{Kress} (classical potential theory), \cite{MMP} (modern potential theory) and recent papers \cite{Dub2013,Dub2014} (Galerkin's method).  For the closely related problem of finding a solenoidal vector field with prescribed boundary value (or a vector field with given divergence) we refer the reader to \cite{L1,L2,L3,KP}.

\section{Approximation solutions}

It is easy to prove that \eqref{Eq1} has at least one solution. Indeed, take a sequence $A_N$ of bounded skew-symmeric matrices converging to $A$ in $L^2(\Omega)$. Let $u_N$ be solutions to the corresponding problems
$$
-\mathrm{div}\, \left(\nabla u_n +A_N\nabla u\right)=f, \quad u_N\in W_0^{1,2}(\Omega),
$$
i.e.
$$
\int_\Omega \left(\nabla u_N+A_N\nabla u_N \right)\nabla \varphi\, dx=(f,\varphi)\quad\text{for all} \quad \varphi \in C_0^\infty(\Omega).
$$
By the Lax-Milgram lemma such solutions exist and are uniqely defined.  Using the test-function $u_N$ in the corresponding integral identity, we have
\begin{equation}\label{IdN}
\int_\Omega |\nabla u_N|^2\, dx= (f,u_N),
\end{equation}
wherefrom
$$
\int_\Omega |\nabla u_N|^2\, dx\leq \int_\Omega f^2\, dx.
$$
Extracting from $u_N$ a weakly convergent in $W_0^{1,2}(\Omega)$ subsequence  and passing to the limit in the integral identity
$$
\int_\Omega  \left(\nabla u_N+A_N\nabla u_N\right)\nabla \varphi\, dx=(f,u_N),
$$
we obtain a solution $u$ to \eqref{Eq1}. Passing to the limit in \eqref{IdN} we see that this solution satisfies the energy inequality
\begin{equation}\label{IdIneq}
\int_\Omega |\nabla u|^2\, dx\leq (f,u).
\end{equation}
Following  Zhikov \cite{Zhikov2004} we call a solution constructed by this procedure an appoximation solution. In the same paper V.V. Zhikov constructed an example of nonapproximation solutions, which satisfy the ``unnatural'' energy inequality
\begin{equation}
\int_\Omega |\nabla u|^2\, dx > (f,u).
\end{equation}
Denote
$$
[u,\varphi]=\int_\Omega A\nabla u\nabla \varphi,
$$
so that \eqref{IntId} can be rewritten as
\begin{equation}\label{BrId}
\int_\Omega \nabla u \nabla \varphi\, dx+[u,\varphi]=(f,\varphi).
\end{equation}
It is clear that
\begin{equation}\label{B}
|[u,\varphi]|\leq C \|\nabla \varphi\|_{L^2(\Omega)} \quad \text{for all}\quad \varphi\in C_0^\infty(\Omega),
\end{equation}
and $[u,\varphi]$, initially defined for $\varphi\in C_0^\infty(\Omega)$, can be extended to a linear bounded functional on $W_0^{1,2}(\Omega)$. Accordingly, in \eqref{BrId} the set of admissible test functions can be extended to $W_0^{1,2}(\Omega)$. Substituting $u$ as a test function in \eqref{BrId} we obtain
\begin{equation}\label{IntId2}
\int_\Omega |\nabla u|^2\, dx+[u,u]=(f,u).
\end{equation}

On the other hand, any $u\in W_0^{1,2}(\Omega)$ satisfying \eqref{B}, is a solution to \eqref{Eq1} with the right-hand side $f$ defined by \eqref{BrId}. So, the set of functions $u\in W_0^{1,2}(\Omega)$ satisfying \eqref{B} is the set of all solutions to \eqref{Eq1} when $f$ ranges over $W^{-1,2}(\Omega)$. For a given skew-symmetric matrix $A$ we denote this set by $D(A)$. When necessary to distinguish between different matrices, we add a subscript to $[\cdot,\cdot]$: for instance, $[u,v]_A$.

The rest of this section is devoted to certain elementary observations. Inequality \eqref{IdIneq} translates into $[u,u]\geq 0$ for approximation solutions. The idea of Zhikov was to find an example of $[u,u]<0$. Since an approximation solution always exists this immediately implies nonuniqueness. On the other hand, if $[u,u]\geq 0$ for all $u\in W_0^{1,2}(\Omega)$ then for any right-hand side $f$ a solution is unique, and \eqref{IdIneq} holds.  Another easy observation is that $[u,u]=0$ for all $u\in W_0^{1,2}(\Omega)$ is equivalent to the uniqueness of solutions together with the energy identity 
\begin{equation}\label{EnId}
\int_\Omega |\nabla u|^2\, dx=(f,u)
\end{equation}
for all $f$.

Also note that if there exists $u$ with $[u,u]>0$, then for problem \eqref{Eq1} with $A$ replaced by $-A$ there exists a nonapproximation solution. Analogously, if for a given matrix $A$ there exists a solution with $[u,u]<0$ then for problem \eqref{Eq1} with $A$ replaced by $-A$ there exists a solution which satisfies the strict energy inequality 
$$
\int_\Omega |\nabla u|^2\, dx<(f,u).
$$
If for some right-hand side $f$ there exist multiple solutions, then there exists a nontrivial solution $u_0$ corresponding to $f=0$. For $u_0$ identity \eqref{IntId2} gives
$$
[u_0,u_0]=-\int_\Omega |\nabla u_0|^2\, dx<0.
$$
Since for any solution $u$ there holds
\begin{equation}\label{F}
[u+tu_0,u+t u_0]= [u,u] +t[u_0,u]+t[u,u_0]+t^2[u_0,u_0]<0\quad\text{for large}\quad t,
\end{equation}
then nonuniqueness for some $f$ implies nonuniqueness for all right-hand sides $f$. 

The same observation also allows us to single out an ``extremal'' solution from $L^{-1}f$. Indeed, consider $I[f]=\sup \{[u,u],\ u\in L^{-1}f\}$. Since an approximation solution always exists, $0\leq I[f]$. For solutions, satisying $[u,u]\geq 0$, $\|u\| \leq \|f\|$ and $[u,u]\leq (f,u)\leq \|f\|^2$. It follows that $I[f]\leq \|f\|^2$. Take a sequence $u_k\in L^{-1}f$ such that $[u_k,u_k]$ monotonically increases and converges to $I[f]$. Then one can easily verify that
$$
\frac{1}{2}\int_\Omega |\nabla (u_k -u_m)|^2\, dx=2\left[\frac{u_k+u_m}{2},\, \frac{u_k+u_m}{2} \right]- [u_k,u_k]-[u_m,u_m]\to 0
$$
as $k,m\to \infty$. Thus, $u_k\to u$ strongly in $W_0^{1,2}(\Omega)$, $Lu=f$ and 
$$
[u,u]=(f,u)-\int_\Omega |\nabla u|^2\, dx= \lim_{k\to \infty} (f,u_k)-\int_\Omega |\nabla u_k|^2\, dx= \lim_{k\to \infty} [u_k,u_k]=I[f].
$$
For any $z\in L^{-1}0$, $t\in\mathbb{R}$ we have $u+tz\in L^{-1} f$, so $[u+tz,u+tz]\leq [u,u]$. Hence $u$ satisfies
\begin{equation}\label{P}
[u,z]+[z,u]=0\quad\text{for any} \quad z\in L^{-1}0.
\end{equation}
From \eqref{F}, any function from $L^{-1}f$ satisfying the latter property maximizes $I[f]$ and is uniquely defined. Denote the special solution of $Lu=f$ which maximizes $I[f]$ by $\tilde{L}^{-1} f$. It is obvious that $\tilde{L}^{-1}f+\tilde{L}^{-1}g\in L^{-1} (f+g)$ and satisfies \eqref{P}. Therefore $\tilde{L}^{-1}f+\tilde{L}^{-1}g=\tilde{L}^{-1} (f+g)$. So, $\tilde{L}^{-1}f:W^{-1,2}(\Omega)\to W_0^{1,2}(\Omega)$ is a linear bounded operator, which is the right inverse for $L$.

For any skew-symmetric matrix $B\in L^\infty(\Omega)$ there holds 
$$
|[u,\varphi]|\leq C\|B\|_\infty \|\nabla u\|_{L^2(\Omega)}\|\nabla \varphi\|_{L^2(\Omega)},
$$
which implies
$$
 D(B)=W_0^{1,2}(\Omega),\quad [u,u]_B=\int_\Omega B\nabla u \nabla u\, dx=0.
$$
Thus, addition of any skew-symmetric matrix $B\in L^\infty(\Omega)$ to $A$  does not change $D(A)$ and $[u,u]$:
$$
D(A+B)=D(A), \quad [u,u]_{A+B}=[u,u]_A+[u,u]_B=[u,u]_A.
$$
In certain sense, the information on uniqueness/nonuniqueness is contained in the set of large values of $A$.
 In \cite{Zhikov2004} Zhikov proved the following
\begin{theorem1}[Zhikov]
Let 
\begin{equation}\label{ZC}
\lim_{p\to\infty} p^{-1} \|A\|_{L^p(\Omega)}=0.
\end{equation}
Then \eqref{Eq1} has a unique solution.
\end{theorem1}
The aim of this paper is to clarify and refine this result.

\section{Around BMO and $\mathcal{H}^1$}

Recall that $BMO$ is the set of locally integrable on $\mathbb{R}^n$ functions such that
$$
\|f\|_{BMO}=\sup \frac{1}{|Q|} \int_Q |f-f_Q|\, dx <\infty, \quad f_Q=\frac{1}{|Q|} \int_Q f\,dx,
$$
where the supremum is taken over all cubes $Q\subset \mathbb{R}^n$ with faces parallel to coordinate hyperplanes (or, alternatively, over all balls).

It is well known that $A\in BMO$ guarantees $D(A)=W_0^{1,2}(\Omega)$ and $[u,u]_A=0$. Indeed, for $u,v \in C_0^\infty(\Omega)$ write 
$$
\int_\Omega A_{ij} u_{x_j} v_{x_i}\, dx=\frac{1}{2} \int_\Omega A_{ij} (u_{x_j} v_{x_i} -u_{x_i} v_{x_j})\, dx.
$$
The crucial fact is that $u_{x_i} v_{x_j} -u_{x_j} v_{x_i}$ belongs to the Hardy space $\mathcal{H}^1(\mathbb{R}^n)$, and
$$
\|u_{x_i} v_{x_j}- u_{x_j} v_{x_i}\|_{\mathcal{H}^1(\mathbb{R}^n)} \leq C \|\nabla u\|_{L^2(\Omega)} \|\nabla v\|_{L^2(\Omega)}.
$$
This fact can be proved using the commutator theorem from \cite{CRW}. Much easier proof was given later in \cite{CLMS}. There is a number of different equivalent definitions of $\mathcal{H}^1(\mathbb{R}^n)$, the proof of \cite{CLMS} used the following one. Let $\Phi$ be a smooth compactly supported function with $\int \Phi\, dx=1$. Denote
$$
M_\Phi f(x)=\sup_{t>0} |\Phi_t\ast f|(x),\quad \Phi_t(x)=t^{-n} \Phi \left(\frac{x}{t} \right).
$$
 Then
$$
\mathcal{H}^1(\mathbb{R}^n)= \left\{f\in L^1(\mathbb{R}^n):\ M_\Phi f\in L^1(\mathbb{R}^n) \right\}
$$ 
Since $BMO$ is dual to $\mathcal{H}^1(\mathbb{R}^n)$ \cite{FeffermanStein},  we arrive at
$$
\int_\Omega A_{ij} u_{x_j} v_{x_i}\, dx\leq C \|A\|_{BMO} \|  \|\nabla u\|_{L^2(\Omega)} \|\nabla v\|_{L^2(\Omega)}, \quad C=C(n).
$$
Thus, the skew-symmetric bilinear form $[\cdot,\cdot]_A$ defined on $C_0^\infty(\Omega) \times C_0^{\infty}(\Omega)$ is continuos with respect to both arguments in the norm of $W_0^{1,2}(\Omega)$ and can be extended to the form on $W_0^{1,2}(\Omega)\times W_0^{1,2}(\Omega)$ satisfying 
$$
|[u,v]_A| \leq C  \|\nabla u\|_{L^2(\Omega)} \|\nabla v\|_{L^2(\Omega)}, \quad [u,v]_A=-[v,u]_A
$$
for all $u,v \in W_0^{1,2}(\Omega)$. Then the existence and uniqueness of a solution to \eqref{Eq1} follows from the Lax-Milgram lemma.

For other useful properties of $BMO$ and Hardy spaces we refer the reader to \cite{Stein93} (see also the  excellent expository article \cite{Semmes}).

A decade ago Maz'ya and Verbitsy \cite{MazVer} proved a reverse result. This result is formulated for a wide class of equations with lower-order terms. We cite here only the basic part which relates to \eqref{Eq1}. Let $L^{1,2}(\mathbb{R}^n)$ be the closure of smooth finite functions with respect to the norm $\|\nabla u\|_{L^2(\mathbb{R}^n)}$, and $L^{-1,2}(\mathbb{R}^n)$ be its dual. The operator
$$
-\mathrm{div}\, (A \nabla u):\, L^{1,2}(\mathbb{R}^n)\rightarrow L^{-1,2}(\mathbb{R}^n) 
$$
 is bounded if and only if 
\begin{gather*}
A^s=\frac{A+A^T}{2}\in L^\infty(\mathbb{R}^n),\quad \text{and}\\
\mathrm{div}\, A^c\in BMO^{-1}(\mathbb{R}^n)^n, \quad A^c=\frac{A-A^T}{2}.
\end{gather*}
Here $BMO^{-1}(\mathbb{R}^n)$ denotes the set of distributions which can be represented as the divergence of a $BMO$ vector field. So, there exists a matrix $\Phi$ with $BMO$ entries such that $\mathrm{div}\, A^c =\mathrm{div}\, \Phi$. In the sense of generalized functions, for $ u,v \in C_0^\infty(\mathbb{R}^n)$ we have
\begin{gather*}
\langle -\mathrm{div}\, (A \nabla u),\, v \rangle= \langle A^s \nabla u, \nabla v  \rangle - \langle \mathrm{div}\, A^c \nabla u, v \rangle\\
=  \langle A^s \nabla u, \nabla v  \rangle - \langle \mathrm{div}\, \Phi \nabla u, v \rangle= -\langle \mathrm{div}\, ((A^s+\Phi)\nabla u), v \rangle.
\end{gather*}
This means that on smooth finite functions the operator is identical to an analogous operator with symmetric part of the matrix bounded and skew-symmetric part from $BMO$. The skew-symmetric part $\Phi$ can be found from  $\Phi=-\triangle^{-1} \mathrm{curl}\, \mathrm{div}\, A^c$. Here the divergence operator acts on $a=a_{ij}$ as $\mathrm{div}_j\, a=\partial _{x_i} a_{ij}$, and the curl of $f=\{f_i\}$ is $\mathrm{curl}_{ij}f =\partial_{x_j} f_i-\partial_{x_i} f_j$. In dimension $2$ the matrix $A^c$ itself belongs to $BMO$. 

The functions from $BMO$ are exponentially summable (the John-Nirenberg lemma \cite{JN}), and satisfy
\begin{equation}\label{JN}
\frac{1}{|Q|} \int_Q |f-f_Q|^p \, dx \leq \left(C p \|f\|_{BMO} \right)^p, \quad C=C(n)
\end{equation}
for any $f\in BMO$ and cube $Q\subset \mathbb{R}^n$. Thus, for $A\in BMO$ the limit in \eqref{ZC} is always finite, but need not be zero, as can be demonstrated by the example of $\log |x|$. 

For $A\in BMO$ Zhikov proved the uniqueness of approximation solutions without using the $BMO$--$\mathcal{H}^1$ duality. In this case, it is sufficient to prove uniqueness for solutions corresponding to the set of bounded right-hand sides, which is dense in $W^{-1,2}(\Omega)$ (see \cite{Zhikov2004} for details). If $A\in BMO\cap L^\infty(\Omega)$, one can obtain the Meyers type estimate $$\|\nabla u\|_{L^q(\Omega)}\leq C \|f\|_{L^\infty(\Omega)}$$ 
for some $q>2$ and $C$ which depend only on $\|A\|_{BMO}$ and $\Omega$. Since $BMO$ functions are summable to any power, $A \nabla u \nabla u \in L^1(\Omega)$.  By H\"older's inequality
$$
|[u,\varphi]_A| \leq C \|\nabla u\| _{L^q(\Omega)} \|A\|_{L^r(\Omega)} \|\nabla \varphi\|_{L^2(\Omega)}, \quad r^{-1}=2^{-1}-q^{-1},
$$
for $\varphi \in C_0^\infty(\Omega)$. Approximating $u$ by such $\varphi$ we arrive at 
$$
[u,u]_A = \int_\Omega A \nabla u \nabla u=0,
$$
which implies uniqueness for approximation solutions corresponding to bounded right-hand sides. 

There is a variety results on equations of type \eqref{Eq1} with $A\in BMO$ (or equations of type \eqref{Eq2} with divergence-free $a\in BMO^{-1}$). See, for instance, the survey article \cite{FRV} on the magnetogeostrophic equation and \cite{FV,SSSZ} for results on regularity and qualitative theory of solutions.

\section{Main result}

Now we are ready to state the main result of this paper.
\begin{theorem}\label{T1}
Let the matrix $A$ satisfy the condition
\begin{equation}\label{ZC1}
\lim_{p\to\infty} p^{-1} \|A\|_{L^p(\Omega)}<\infty.
\end{equation}
Then \eqref{Eq1} has a unique solution.
\end{theorem}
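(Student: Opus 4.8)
The plan is to reduce the uniqueness statement to a single integrability fact and then extract that fact from the div--curl structure of $|\nabla u|^2$. First, recall from Section~2 that it suffices to show $[u,u]_A=0$ for every $u\in D(A)$; equivalently, applying \eqref{IntId2} to $u_0\in L^{-1}0$ it is enough that $[u_0,u_0]_A\ge 0$. The decisive observation is that the integrand is pointwise null: since $A$ is skew-symmetric while $u_{x_i}u_{x_j}$ is symmetric in $i,j$, we have $A_{ij}u_{x_i}u_{x_j}=0$ a.e. Consequently, if one can prove the absolute convergence
\[
\int_\Omega |A|\,|\nabla u|^2\,dx<\infty,
\]
then the extended bilinear form coincides with the (vanishing) Lebesgue integral, giving $[u,u]_A=0$ and hence uniqueness. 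Following Zhikov, I would first establish this for approximation solutions with bounded right-hand side (dense in $W^{-1,2}(\Omega)$) and recover the full statement through the approximation scheme of Section~2.

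The mechanism behind the integrability is as follows. Writing $\sigma=\nabla u+A\nabla u=(I+A)\nabla u$, the equation reads $-\mathrm{div}\,\sigma=f$, and by skew-symmetry $|\nabla u|^2=\nabla u\cdot\sigma$. Thus $|\nabla u|^2$ is a product of the curl-free field $\nabla u$ and the field $\sigma$ of controlled divergence; for $u_0\in L^{-1}0$ one even has $\mathrm{div}\,\sigma=0$ exactly. This is precisely the div--curl structure already exploited in Section~3 to place $u_{x_i}v_{x_j}-u_{x_j}v_{x_i}$ in $\mathcal H^1$. Here, crucially, the product carries a sign, $|\nabla u|^2\ge0$, so the refinement of the Coifman--Lions--Meyer--Semmes lemma for nonnegative div--curl quantities (in the spirit of S.~M\"uller's theorem on Jacobians) upgrades membership in $\mathcal H^1$ to membership in the Zygmund class $L\log L$ locally. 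Since condition \eqref{ZC1} is, as noted in the abstract, equivalent to $\exp(\gamma|A|)\in L^1(\Omega)$ for some $\gamma>0$, i.e. $A$ lies in the Orlicz space $\exp L$, and since $\exp L$ and $L\log L$ are complementary Young classes, the Orlicz--H\"older inequality yields exactly $\int_\Omega|A|\,|\nabla u|^2\,dx<\infty$. This is the step that makes the passage from Zhikov's $o(p)$ hypothesis to the borderline $O(p)$ of \eqref{ZC1} possible: the condition $o(p)$ corresponds to $\exp(\gamma|A|)\in L^1$ for \emph{every} $\gamma$, whereas the sharp $\exp L$--$L\log L$ duality only consumes a single $\gamma$.

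The main obstacle, and the technical heart of the argument, is that $\sigma=(I+A)\nabla u$ is a priori only in $L^1$, not $L^2$, so neither the div--curl lemma nor the $L\log L$ improvement applies verbatim to $u$ itself. I would therefore run the estimate on the bounded approximations $A_N$ (pointwise rescaled so as to preserve skew-symmetry) and pass to the limit; but the naive constants degrade like $\|\sigma_N\|_{L^2}\sim N$, mirroring the collapse of Meyers' exponent $q_N\downarrow 2$. The crux is to bound $\int_\Omega|A|\,|\nabla u_N|^2$ \emph{uniformly} in $N$, and this is where Morrey spaces enter. Rather than tracking a global higher-integrability exponent, I would reformulate the self-improvement at the logarithmic/Morrey scale, controlling local oscillations of $\nabla u_N$ through a Caccioppoli inequality and a John--Nirenberg-type argument calibrated to the $\exp L$--$L\log L$ pairing, so that the $L\log L$ norm of $|\nabla u_N|^2$ remains controlled as $N\to\infty$. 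A final, more routine, point is to globalize the interior estimate up to the Lipschitz boundary, using the vanishing boundary data together with an extension/reflection across $\partial\Omega$.
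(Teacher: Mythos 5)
Your reduction is sound as far as it goes: by Section~2, uniqueness follows once every null solution $u_0\in L^{-1}0$ satisfies $[u_0,u_0]_A\ge 0$, since \eqref{IntId2} with $f=0$ gives $[u_0,u_0]_A=-\|\nabla u_0\|^2_{L^2(\Omega)}$; and the pointwise identity $A_{ij}u_{x_i}u_{x_j}=0$ a.e.\ is correct. The fatal step is the pivotal claim that absolute convergence, $\int_\Omega|A|\,|\nabla u|^2\,dx<\infty$, forces the \emph{extended} form $[u,u]_A$ to coincide with the (vanishing) Lebesgue integral. This is asserted, not proved, and it is precisely the kind of formal identification that this problem forbids. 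By definition $[u,u]_A=\lim_k\int_\Omega A\nabla u\cdot\nabla\varphi_k\,dx$ for smooth $\varphi_k\to u$ in $W_0^{1,2}(\Omega)$; to identify the limit with $\int_\Omega A\nabla u\cdot\nabla u\,dx=0$ you must pass to the limit against the fixed factor $A\nabla u$, which is only in $L^1(\Omega)$, and $L^2$-convergence of $\nabla\varphi_k$ does not permit this. The paper's own $BMO$ argument (Section~3) needs exactly $A\nabla u\in L^2(\Omega)$, supplied by a Meyers estimate, to legitimize this step; and Zhikov's example (Section~6) shows that such identifications genuinely fail here even when all integrals converge absolutely: there $au\nabla u\in L^\infty(\Omega)$, yet $[u,u]=-\int_\Omega au\nabla u\,dx\ne\int_\Omega au\nabla u\,dx$, i.e.\ formal integration by parts is false despite bounded integrands. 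What a truncation argument honestly yields is $|[u,u_\lambda]|\le C\int_{\{g>\lambda\}}|A|\,g^2\,dx$ with $g=M|\nabla u|$, so the sufficient condition you would actually need is $|A|\,\bigl(M|\nabla u|\bigr)^2\in L^1(\Omega)$ --- strictly stronger than what your duality argument targets.

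The mechanism you propose for the integrability does not close either. The Coifman--Lions--Meyer--Semmes lemma and M\"uller's $L\log L$ theorem require the two fields in dual Lebesgue classes, whereas $\sigma=(I+A)\nabla u$ is a priori only in $L^1(\Omega)$ (you concede this), and the proposed repair --- uniform estimates on bounded approximations $A_N$ via ``a John--Nirenberg-type argument calibrated to the $\exp L$--$L\log L$ pairing'' --- is a program, not a proof. Worse, it is structurally incapable of yielding the theorem: uniqueness is a statement about an \emph{arbitrary} null solution $u_0$, and with $f=0$ the approximating problems give $u_N\equiv 0$, so no uniform estimate on approximation solutions can detect $u_0$; the existence of non-approximation solutions violating every bound available for approximation ones is exactly Zhikov's nonuniqueness phenomenon. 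The paper avoids both traps by working directly on the given null solution: test \eqref{ZID} with the Lipschitz truncation $u_\lambda$ of $u$ itself, use skew-symmetry on $\{g\le\lambda\}$, multiply by $\varepsilon\lambda^{-1-\varepsilon}$ and integrate in $\lambda$, then apply H\"older with exponents $\bigl(2,\,\tfrac{2}{1-\varepsilon},\,\tfrac{2}{\varepsilon}\bigr)$; letting $\varepsilon\to 0$ produces the factor $\lim_{p\to\infty}p^{-1}\|A\|_{L^p(\Omega)}$, hence $u=0$ under a smallness condition \eqref{ZC1a}, which is then removed by the $\pm tA$ scaling trick using $[u,u]_{\pm tA}=\pm t\,[u,u]_A$. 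In effect, the paper's H\"older step with exponent $2/\varepsilon$ is your $\exp L$--$L\log L$ pairing in disguise, but applied to explicit quantities built from the null solution itself, not to a hoped-for $\mathcal{H}^1$ or $L\log L$ regularity of $|\nabla u|^2$ that nothing in the hypotheses guarantees.
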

By the John-Nirenberg estimate \eqref{JN}, matrices with $BMO$ elements satisfy \eqref{ZC1}. It is easy to see that \eqref{ZC1} is equivalent to the exponential summability of $A$:
\begin{equation}\label{exp}
\int_\Omega \exp (\gamma |A|)\, dx= \sum_{p=0}^{\infty} \frac{\gamma^p}{p!}\int_\Omega |A|^p \, dx,
\end{equation}
and by the Stirling formula
$$
 \frac{\gamma^p}{p!}\int_\Omega |A|^p \, dx\sim \frac{1}{\sqrt{2\pi p}} (Le\gamma )^p, \quad L=\lim_{p\to \infty}p^{-1} \|A\|_{L^p(\Omega)}.
$$
The series on the right-hand side of \eqref{exp} converge if $\gamma< (Le)^{-1}$. 

Let $M \exp (\gamma |A|)$ be the Hardy-Littlewood maximal function of  $\exp (\gamma |A|)\in L^1(\Omega)$, which is continued by zero outside $\Omega$. Clearly,
$$
|A| \leq \frac{1}{\gamma}\log M \exp (\gamma |A|).
$$ 
By the result of Coifman and Rochberg \cite{CR}, the right-hand side of the last expression is in $BMO$ with the $BMO$ ``norm'' bounded by $\gamma^{-1}C(n)$. So, \eqref{ZC1} is equivalent to $|A|$ having a $BMO$ majorant.

Let us note that the condition of exponential summability naturally arises in the theory of qusiharmonic vector fields with unbounded distortion \cite{IS}.

It is easy to give an example of function satisfying \eqref{ZC1} but not in $BMO$. It follows from the definition of $BMO$ that for two touching cubes of the same size there holds
\begin{equation}\label{BP}
|f_{Q_1}-f_{Q_2}| \leq 2^{n+1} \|f\|_{BMO}.
\end{equation}
Let $n=2$, $x=(x_1,x_2)$. Take $f(x)=\log |x|$ if $x_1x_2>0$ and $f=0$ otherwise. Clearly, for such function \eqref{BP} is not satisfied.

The condition of theorem \eqref{T1} is sufficient for the uniqueness but far from necessary. It is worth to note that the addition of a skew-symmetric matrix with zero divergence to matrix $A$  does not change the equation. Let $C\in L^2(\Omega)$ be a skew-symmetric matrix with $\mathrm{div}\, C=0$, $u\in W_0^{1,2}(\Omega)$ and $\varphi \in C_0^\infty(\Omega)$. We have
$$
\langle -\mathrm{div}\, (C\nabla u), \varphi\rangle =\int_\Omega C_{ij} u_{x_j} \varphi_{x_i}\, dx
=\int_\Omega C_{ij} (u\varphi)_{x_j}\, dx-\int_\Omega u C_{ij} \varphi_{x_ix_j}\, dx=0.
$$
In dimension $2$ this does not bring anything new since any skew-symmetric matrix $2\times2$ with zero divergence is of the form
$$
\begin{pmatrix}
0 & c \\
-c & 0
\end{pmatrix}, \quad c=\mathrm{const},
$$
and the addition of any bounded matrix to $A$ does not affect \eqref{ZC1}. In dimension $3$ the situation is more interesting. Write the skew-symmetric matrix $A$ as
$$
A =
\begin{pmatrix}
0 & -a_3 & a_2 \\
a_3 & 0 & -a_1 \\
-a_2 & a_1 & 0
\end{pmatrix}, \quad A\xi =a\times \xi, \quad a= (a_1,a_2,a_3).
$$
The condition of zero divergence leads to $\nabla \times a=0$, which is satisfied by $a=\nabla \varphi$. This can be also seen from \eqref{R1}. We can add any matrix of the form
$$
C(\varphi)=
\begin{pmatrix}
0 & -\varphi_{x_3} & \varphi_{x_2} \\
\varphi_{x_3} & 0 & -\varphi_{x_2} \\
-\varphi_{x_2} & \varphi_{x_1} & 0
\end{pmatrix},\quad \varphi \in W^{1,2}(\Omega), 
$$
to $A$ and the equation basically stays the same. This is the reason why in \cite{MazVer} the result is given in terms of equivalence classes for $n\geq 3$.

\section{Lipschitz truncations. The proof of the main result}
In this section we prove Theorem~\ref{T1}. The proof relies on the technique of Lipschitz trunctions. For the reader's convenience we briefly remind the details. Let $u\in W_0^{1,1}(\Omega)$ and $g=M\,|\nabla u|)$, where $M$ stands for the standard Hardy-Littlewood maximal function:
$$
Mf (x)=\sup \frac{1}{|B|} \int_B |f|\, dx,
$$ 
where the supremum is taken over all balls $B\subset \mathbb{R}^n$ which contain $x$ (uncentered maximal function)  or are centered at $x$ (centered maximal function). Then for almost all $x,y\in \Omega$ there holds
$$
|u(x)-u(y)|\leq C(n) |x-y| (g(x)+g(y)), \quad |u(x)|\leq C \mathrm{dist}\, (x,\partial \Omega) g(x).
$$
From these estimates it follows that on the set $F(\lambda)=\{g\leq \lambda\}\cup (\mathbb{R}^n\setminus \Omega)$ the function $u$ is Lipschitz with the Lipschitz constant $C\lambda$. Using the McShane theorem \cite{Mac}, we can extend $\left.u\right|_{F(\lambda)}$ to the whole space $\mathbb{R}^n$ with the same Lipschitz constant $C\lambda$. The resulting extension $u_\lambda$ is called the Lipschitz truncation of $u$. For further details on  Lipschitz truncations and their applications we recommend \cite{DR}.

 Let $u$ be a solution to \eqref{Eq1} with $f=0$, i.e.
 \begin{equation}\label{ZID}
 \int_\Omega (\nabla u+A\nabla u)\nabla \varphi\, dx=0 \quad\text{for all} \quad\varphi \in C_0^\infty(\Omega).
 \end{equation}
By approximation, one can take here Lipshitz $\varphi$ vanishing on $\partial \Omega$.
 
 Take the test function $\varphi=u_\lambda$ in \eqref{IntId}. Using the skew-symmetry of $A$ we obtain
 $$
 \int_{\{g\leq \lambda\}} |\nabla u|^2\, dx=-\int_{\{g>\lambda\}} (A+I)\nabla u\nabla u_\lambda\, dx \leq C \lambda \int_{\{g>\lambda\}} (|A|+1)|\nabla u|\, dx.
 $$
 Next, multiply this inequality by $\varepsilon \lambda^{-1-\varepsilon}$, $\varepsilon>0$, and integrate with respect to $\lambda$ from $1$ to $\infty$. Fubini's theorem yields
 $$
 \int_\Omega |\nabla u|^2 (\max (1,g))^{-\varepsilon}\, dx \leq \frac{C\varepsilon}{1-\varepsilon} \int_\Omega (|A|+1) |\nabla u| (g^{1-\varepsilon}-1)_{+}\, dx.
 $$
Using H\"older's inequality and the boundedness of the maximal function in $L^2$, for small $\varepsilon$ we obtain 
\begin{gather*}
 \int_\Omega |\nabla u|^2 (\max (1,g))^{-\varepsilon}\, dx\\
 \leq C\varepsilon\left(\int_\Omega|\nabla u|^2\, dx \right)^{1/2} \left(\int_\Omega g^2\, dx \right)^{(1-\varepsilon)/2} \left(\int_\Omega (|A|+1)^{2/\varepsilon} \right)^{\varepsilon/2}\\
 \leq C\varepsilon\left(\int_\Omega (|A|+1)^{2/\varepsilon}\,dx \right)^{\varepsilon/2}  \left(\int_\Omega |\nabla u|^2\, dx \right)^{1-\varepsilon/2}. 
\end{gather*}
 Passing to the limit as  $\varepsilon\to 0$ we arrive at 
 $$
\int_\Omega |\nabla u|^2\, dx \leq C \lim_{p\to \infty} \frac{\|A\|_{L^p(\Omega)}}{p}\int_\Omega |\nabla u|^2\, dx,\quad C=C(n).
 $$
 Therefore, $u=0$ provided that the limit in \eqref{ZC1} is small enough.  The theorem is thus proved for $A$ such that 
 \begin{equation}\label{ZC1a}
 \lim_{p\to\infty} p^{-1}\|A\|_{L^p(\Omega)}< C(n)
 \end{equation}
  for some positive constant $C(n)$. Let $A$ be a skew-symmetric matrix satisfying \eqref{ZC1}. Consider \eqref{Eq1} with $A$ replaced by $\pm tA$ with $t>0$ such that $tA$ satisfies \eqref{ZC1a}. Clearly, $D(\pm tA)=D(A)$ and $[u,u]_{\pm tA}=\pm t[u,u]_A$. For $tA$ we have uniqueness, so $[u,u]_{tA}\geq 0$ for all $u\in D(A)$. Similarly, $[u,u]_{-tA}\geq 0$ for all $u\in D(A)$. Thus, $[u,u]=0$ for all $u\in D(A)$. This immediately implies the uniqueness of solutions and validity of \eqref{EnId}. The proof of Theorem~\ref{T1} is complete. 
 
 \section{Corollaries. }
 
 In this section we focus on problem \eqref{Eq2} with ``standard'' solenoidal drift. A vector field $a\in L^1(\Omega)$ is called solenoidal (or divergence free) if $\mathrm{div}\, a=0$ in the sense of distributions, i.e.
 $$
 \int a\nabla \varphi\, dx=0 \quad\text{for all} \quad \varphi\in C_0^\infty(\Omega).
 $$
 A solution to \eqref{Eq2} is a function $u\in W_0^{1,2}(\Omega)$ which satisfies
\begin{equation}\label{IIvec}
 \int_\Omega (\nabla u+au)\nabla \varphi\, dx=(f,\varphi) \quad\text{for all} \quad \varphi\in C_0^\infty(\Omega).
 \end{equation}
 Using the same reasoning as above, one can show the existence of approximation solutions if the solenoidal vector field 
 \begin{equation}\label{AC}
  a\in L^{2n/(n+2)}(\Omega)\quad \text{for}\quad n\geq 3,\quad \text{and}\quad a\in L\log^{1/2}L(\Omega)\quad \text{for}\quad n=2.
 \end{equation}
 In view of the embedding theorem this condition guarantees $au\in L^1(\Omega)$. Denote 
 \begin{gather*}
 [u,\varphi]_a=\int_\Omega au\nabla \varphi\, dx,\quad u\in W_0^{1,2}(\Omega), \quad \varphi\in C_0^\infty(\Omega),\\
  D(a)= \{u\in W_0^{1,2}(\Omega):\ |[u,\varphi]_a| \leq C \|\nabla \varphi\|_{L^2(\Omega)}\quad \text{for all} \quad\varphi \in C_0^\infty(\Omega)\}.
 \end{gather*}
 As above, the set ${D}(a)$ coincides with the set of all solutions  to \eqref{Eq2}, for a solution $u$ the form $[u,\varphi]_a$ is extended to $\varphi \in W_0^{1,2}(\Omega)$, \eqref{IIvec} can be written in the form
 \eqref{BrId}, substituting $u$ as a test-function one obtains \eqref{IntId2}. Further on, if there is no ambiguity, we drop the subscript $a$ in the form $[u,\varphi]$.
 
 The simplest condition (apart from the trivial $a\in L^\infty(\Omega)$) which guarantees the existence and uniqueness of a solution is $a\in L^n(\Omega)$. For $n>2$, by the Sobolev embedding theorem,
\begin{equation}\label{LN}
\left|\int_\Omega au\nabla \varphi\, dx\right|\leq C \|a\|_{L^n(\Omega)} \|u\|_{L^2(\Omega)} \|\nabla \varphi\|_{L^2(\Omega)},
 \end{equation}
 so the form $[u,\varphi]$ is continuous with respect to both arguments in the norm of $W_0^{1,2}(\Omega)\times W_0^{1,2}(\Omega)$, and the existence and uniqueness of a solution follows from the Lax-Milgram lemma. 
 
 For $n=2$, let $\Omega$ be a simply-connected domain. Since $a=(a_1,a_2)$ is solenoidal, we can find $Q\in W^{1,2}(\Omega)$ such that $a_1=-Q_y$, $a_2=Q_x$. Rewrite \eqref{Eq2} in the form \eqref{Eq1}:
$$
 \int_\Omega u(a_1\varphi_x+a_2\varphi_y)\, dxdy=\int_\Omega Q (u_y \varphi_x -u_x\varphi_y)\, dxdy
$$
 for  all $u\in W_0^{1,2}(\Omega)$ and  $\varphi \in C_0^\infty(\Omega)$. Extend the function $Q$ to the whole plane so that $\|Q\|_{W^{1,2}(\mathbb{R}^n)}\leq \|Q\|_{W^{1,2}(\Omega)}$. By the Poincar\'e inequality, for any ball $B\subset \mathbb{R}^2$ there holds
 $$
 |B|^{-1}\int_{B} |Q-\bar Q|^2\, dx\leq \int_{B} |a|^2\, dx.
 $$
 Hence $Q\in BMO$ and $\|Q\|_{BMO}\leq C\|a\|_{L^2(\Omega)}$. Using the duality of $BMO$ and $\mathcal{H}^1$, we obtain \eqref{LN} for $n=2$.

A thorough study of regularity properties (boundedness, strong maximum principle, continuity, Harnack's inequality) of solutions of second-order linear elliptic and parabolic equations with ``rough'' divergence free drifts from $L^n$ and Morrey spaces generalizing $L^n$ was done by Nazarov and Uraltseva in \cite{NU}. Interesting examples are due to Filonov \cite{Filonov}.

 It is not hard to prove \cite{Z97} that $a\in L^2(\Omega)$ guarantees the uniqueness of solutions and validity of the energy identity \eqref{EnId}. Indeed, approximating $u$ by smooth functions one can prove that for $a\in L^2(\Omega)$ there holds
 $$
 \int_\Omega au\nabla \varphi\, dx=-\int_\Omega a\varphi \nabla u\, dx, \quad u\in W_0^{1,2}(\Omega),\  \varphi \in C_0^\infty(\Omega),
 $$
 so \eqref{IIvec} acquires the form 
 \begin{equation}\label{IIvec1}
  \int_\Omega \nabla u\nabla \varphi\, dx= \int_\Omega a\varphi \nabla u\, dx +(f,\varphi)\quad\text{for all} \quad \varphi\in C_0^\infty(\Omega).
 \end{equation}
 Approximating $T_k(u) = \max(\min (u,k), -k)$, $k>0$, by bounded smooth functions, we can set $\varphi=T_k(u)$ in \eqref{IIvec1}, which gives
 $$
 \int_\Omega |\nabla T_k(u)|^2\, dx-(f,T_k(u))=\int_\Omega a T_k(u)\nabla u\, dx=\int_\Omega a\nabla\left( \int_0^u T_k(s)\, ds\right) \, dx=0.
 $$
 Sending $k$ to infinity, and using $\nabla T_k(u)=\chi_{\{|u|<k\}}\nabla u$ a.e. in $\Omega$, we finally obtain energy identity \eqref{EnId}
 which implies the uniqueness.
 
 It was in fact the convection-diffusion equation in form \eqref{Eq2} for which Zhikov's example in \cite{Zhikov2004} was constructed. The example had the following form: $ \Omega=B_1\subset \mathbb{R}^3$, $a=a_0(x|x|^{-1}) x|x|^{-3}$, $u=(1-|x|^4)u_0(x|x|^{-1})$,  where $ \int_S a_0 \, d\sigma= \int_S u_0a_0\, d\sigma=0$ and $\int_S a_0u_0^2\, d\sigma=-2$, $S=\{|x|=1\}$. Using $u,\, a\nabla u\in L^\infty(\Omega)$ one can verify that $[u,u]=-\int_\Omega au\nabla u\, dx=-1$. A similar example for the problem $-\triangle u + b\nabla u + \mathrm{div}\, (bu)=f\in W^{-1,2}(\Omega)$, $u\in W_0^{1,2}(\Omega)$ was constructed  in \cite{BD2015}, where the question of existence and uniqueness was studied in the framework of renormalized solutions.
 
In \cite{Zhikov2004}  Zhikov proved the following result which improves the $L^2(\Omega)$ condition. 
\begin{theorem1}[Zhikov]
If the solenoidal vector field $a$ satisfies $\lim_{\varepsilon\to 0} \varepsilon\|a\|_{L^{2-\varepsilon}(\Omega)}=0$, then the approximation solution of \eqref{Eq2} is unique for each $f\in W^{-1,2}(\Omega)$.
\end{theorem1}
 
 In dimension $2$ this result can be strengthed.
 \begin{theorem}\label{T2}
 Let $n=2$ and the solenoidal vector field $a=(a_1,a_2)$ satisfy 
\begin{equation}\label{ZC2}
 \lim_{\varepsilon\to 0} \sqrt{\varepsilon} \|a\|_{L^{2-\varepsilon}(\Omega)}=0.
 \end{equation}
 Then for any $f\in W^{-1,2}(\Omega)$ equation \eqref{Eq2} has a unique solution.
 \end{theorem}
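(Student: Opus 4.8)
The plan is to deduce Theorem~\ref{T2} from the already-established Theorem~\ref{T1} by means of the stream function. Assuming first that $\Omega$ is simply connected, since $a$ is solenoidal I would choose $Q$ with $a=(-Q_y,Q_x)$, normalized by $\int_\Omega Q\,dx=0$. As in the computation preceding the theorem, this recasts \eqref{Eq2} as \eqref{Eq1} for the skew-symmetric matrix with single off-diagonal entry $Q$, and the two bilinear forms agree, $[u,\varphi]_a=[u,\varphi]_A$; hence $D(a)=D(A)$, and uniqueness for the two problems is the same assertion. Since $|A|$ is comparable to $|Q|$ and $|\nabla Q|=|a|$, hypothesis \eqref{ZC2} reads $\sqrt{\varepsilon}\,\|\nabla Q\|_{L^{2-\varepsilon}(\Omega)}\to0$, so it suffices to verify that $Q$ obeys \eqref{ZC1}: Theorem~\ref{T1} then yields $[u,u]_A=0$ for all $u\in D(A)$, whence uniqueness and the energy identity \eqref{EnId}. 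Note that the $L^2$ argument recalled above (which gives $Q\in BMO$) is unavailable here, because $a$ need not lie in $L^2$; the role of \eqref{ZC2} is precisely to quantify how badly $\nabla Q$ fails to be square-integrable.

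The mechanism converting the bound on $\nabla Q$ into one on $\|Q\|_{L^p}$ is the blow-up of the Sobolev constant as the exponent approaches the dimension. I would extend $Q$ to $\widetilde Q$ on $\mathbb R^2$ by a fixed Stein operator, bounded on the compact range $r\in[3/2,2]$ by interpolation, so that $\|\nabla\widetilde Q\|_{L^{2-\varepsilon}(\mathbb R^2)}\le C_\Omega\|\nabla Q\|_{L^{2-\varepsilon}(\Omega)}$ uniformly in $\varepsilon$. For $r=2-\varepsilon$ the Sobolev conjugate is $r^\ast=2(2-\varepsilon)/\varepsilon=:p$, i.e. $\varepsilon=4/(p+2)$, and Talenti's sharp constant in $\|\widetilde Q\|_{L^{r^\ast}(\mathbb R^2)}\le C(r)\|\nabla\widetilde Q\|_{L^r(\mathbb R^2)}$ behaves like $C(2-\varepsilon)\sim(2\pi)^{-1/2}\varepsilon^{-1/2}$, the rational factor $\bigl((p-1)/(2-p)\bigr)^{(p-1)/p}$ supplying exactly the power $\varepsilon^{-1/2}$ while the Gamma-function factors tend to a finite limit. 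Restricting back to $\Omega$ and writing $\|a\|_{L^{2-\varepsilon}}=\varepsilon^{-1/2}\bigl(\sqrt{\varepsilon}\,\|a\|_{L^{2-\varepsilon}}\bigr)$ together with $\varepsilon^{-1}=(p+2)/4$, I obtain
\[
p^{-1}\|Q\|_{L^p(\Omega)}\le C\,p^{-1}\varepsilon^{-1}\bigl(\sqrt{\varepsilon}\,\|a\|_{L^{2-\varepsilon}(\Omega)}\bigr)=C\,\frac{p+2}{4p}\,\bigl(\sqrt{\varepsilon}\,\|a\|_{L^{2-\varepsilon}(\Omega)}\bigr),
\]
which tends to $0$ as $p\to\infty$ by \eqref{ZC2}. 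Thus $Q$ in fact satisfies Zhikov's condition \eqref{ZC}, and \eqref{ZC1} holds a fortiori.

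The main obstacle is exactly this quantitative estimate: the bare embedding $W^{1,2-\varepsilon}\hookrightarrow L^{2(2-\varepsilon)/\varepsilon}$ does not suffice, one needs the correct order $\varepsilon^{-1/2}$ of its constant, against which the $\sqrt{\varepsilon}$ in \eqref{ZC2} is calibrated. I would secure this order either from Talenti's explicit formula, by reading off the asymptotics of the Gamma factors, or---if sharp constants are to be avoided---from the Riesz-potential representation of $Q$ in terms of $\nabla Q$ combined with the Hardy--Littlewood--Sobolev inequality, whose constant exhibits the same $(2-r)^{-1/2}$ growth. Two routine points remain. First, the extension must be controlled uniformly in $\varepsilon$; this follows from the boundedness of a single Stein operator on the compact exponent interval, after which a cutoff renders $\widetilde Q$ admissible for the inequality on $\mathbb R^2$, the cutoff error being dominated, via the mean-zero normalization and Poincar\'e's inequality on $\Omega$, by $\|\nabla Q\|_{L^{2-\varepsilon}(\Omega)}$ uniformly in $\varepsilon$. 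Second, for non-simply-connected $\Omega$ the Hodge--Weyl decomposition $a=b+\nabla^\perp Q$ from Section~1, with $b$ harmonic and bounded on $\Omega$, reduces everything to the stream-function part, since adjoining the bounded skew-symmetric matrix attached to $b$ alters neither $D(A)$ nor $[u,u]$, as recorded in Section~2.
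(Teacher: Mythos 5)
Your proposal is essentially correct, but it follows a genuinely different route from the paper. The paper obtains Theorem~\ref{T2} as the case $n=2$ of Theorem~\ref{T3}, whose proof works directly on the drift formulation \eqref{IIvec}: Lipschitz truncations $u_\lambda$ are inserted as test functions, the difference $u-u_\lambda$ is controlled by the pointwise Poincar\'e estimate \eqref{D2}, and the resulting Riesz potential $V$ is estimated via \eqref{MP1} or \eqref{TI1}; no stream function and no reduction to Theorem~\ref{T1} appear. Your route --- write $a=\nabla^\perp Q$, check $[u,\varphi]_a=[u,\varphi]_{A_Q}$ so that the two solution sets coincide, and convert \eqref{ZC2} into a bound on $p^{-1}\|Q\|_{L^p}$ through the $\varepsilon^{-1/2}$ blow-up of the constant in $W^{1,2-\varepsilon}(\mathbb{R}^2)\hookrightarrow L^{2(2-\varepsilon)/\varepsilon}(\mathbb{R}^2)$ --- is precisely the alternative the author mentions after the proof of Theorem~\ref{T3} (``Theorem~\ref{T3} can be obtained as a corollary of Theorem~\ref{T1} \dots In dimension $2$ this is easy'') but never carries out. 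Your arithmetic is right: with $p=2(2-\varepsilon)/\varepsilon$ one gets $\|Q\|_{L^p(\Omega)}\le C\varepsilon^{-1/2}\|a\|_{L^{2-\varepsilon}(\Omega)}$ and hence $p^{-1}\|Q\|_{L^p(\Omega)}\le C\,\frac{p+2}{4p}\,\sqrt{\varepsilon}\,\|a\|_{L^{2-\varepsilon}(\Omega)}\to0$, so $Q$ even satisfies Zhikov's condition \eqref{ZC}, and with a finite rather than zero limit in \eqref{ZC2} the same computation gives \eqref{ZC1} and recovers the Lebesgue-scale half of Theorem~\ref{T3} for $n=2$. What the two approaches buy: yours is more elementary (a reduction to a known theorem plus a quantitative Sobolev constant) and makes transparent why $\sqrt{\varepsilon}$ is the right calibration; the paper's is dimension-independent, also covers the Morrey class $M^n(\Omega)$, and avoids solving $\mathrm{div}\,A=a$, which for $n\ge3$ or complicated geometry is exactly the ``analytical work'' the author wants to circumvent.

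One point in your last step needs repair. In a multiply connected $\Omega$ there is no ``bounded skew-symmetric matrix attached to $b$'': the harmonic field $b$ has nonzero flux around the holes, hence admits no single-valued stream function and no representation $b=\mathrm{div}\,B$ --- that is the very reason $b$ occurs in the Hodge decomposition, so the Section~2 observation about bounded skew-symmetric matrices does not apply to it verbatim. The fix is immediate and uses the same idea: $b$ is bounded and solenoidal, so $(u,\varphi)\mapsto\int_\Omega bu\nabla\varphi\,dx$ is a bounded bilinear form on $W_0^{1,2}(\Omega)\times W_0^{1,2}(\Omega)$, and $\int_\Omega bu\nabla u\,dx=\tfrac12\int_\Omega b\,\nabla(u^2)\,dx=0$ because $u^2\in W_0^{1,1}(\Omega)$ and $C_0^\infty(\Omega)$ is dense there. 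Consequently a solution $u$ of the homogeneous problem for $a=b+\nabla^\perp Q$ lies in $D(A_Q)$, testing with $u$ gives $\|\nabla u\|_{L^2(\Omega)}^2+[u,u]_{A_Q}=0$, and the proof of Theorem~\ref{T1} (which shows $[u,u]_{A_Q}=0$ for every $u\in D(A_Q)$ under \eqref{ZC1}) forces $u=0$. With this correction your argument is complete.
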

We shall obtain this theorem as a partial case of a more general statement.
 
Recall that the Morrey space $M^p(\Omega)$, $1\leq p \leq \infty$, is the set of all integrable functions such that
$$
\|f\|_{M^p(\Omega)}: =\sup R^{-n(1-1/p)} \int_{\Omega\cap B_R} |f|\, dx<\infty,
$$
where the supremum is taken over all balls $B_R$ of radius $R$. It is well known that for $f\in M^n(\Omega)$ the Riesz potential 
$$
I_{\Omega} f(x)=\int_\Omega |x-y|^{1-n}|f(y)|\, dy
$$
is exponentially summable and satisfies \cite[proof of Lemma 7.20]{GT}
\begin{equation}\label{MP1}
\int_\Omega |I_\Omega f|^q\, dx \leq n(n-1)^{q-1}\omega_n q^q (\mathrm{diam}\,\Omega)^n \|f\|_{M^n(\Omega)}, 
\end{equation}
where positive constants $c_1$ and $c_2$ depend only on $n$, $p$. We shall also use the following simple potential estimate \cite[Lemma 7.12]{GT}. Let $1\leq q\leq \infty$, $0\leq \delta=p^{-1}-q^{-1}<\mu$, $\mu=n^{-1}$. Then
\begin{equation}\label{TI1}
\|I_\Omega f\|_{L^q(\Omega)} \leq \left(\frac{1-\delta}{\mu-\delta} \right)^{1-\delta}\omega_n^{1-\mu} |\Omega|^{\mu-\delta}\|f\|_{L^p(\Omega)}.
\end{equation}

\begin{theorem}\label{T3}
Let $a\in M^n(\Omega)$ and satisfy \eqref{AC}. Then \eqref{Eq2} has a unique solution. The same conclusion also holds if
\begin{equation}\label{C2}
\lim_{\varepsilon\to 0} \varepsilon^{1/n} \|a\|_{L^{n-\varepsilon} (\Omega)}<\infty.
\end{equation}
\end{theorem}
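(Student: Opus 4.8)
The plan is to reduce problem \eqref{Eq2} to the matrix formulation \eqref{Eq1} and then invoke Theorem~\ref{T1}. Existence of an approximation solution is already furnished by the hypothesis \eqref{AC}, so the only issue is uniqueness, which by the observations of Section~2 is equivalent to the identity $[u,u]_a=0$ for every $u\in D(a)$. Since $[u,\varphi]_a=\int_\Omega au\nabla\varphi\,dx$ coincides with $[u,\varphi]_A=\int_\Omega A\nabla u\nabla\varphi\,dx$ whenever $a=\mathrm{div}\,A$ (the computation in the Introduction, extended to our $A$ by a standard mollification of $a$ together with the $L^q$ bounds obtained below), it suffices to produce a skew-symmetric matrix $A$ with $\mathrm{div}\,A=a$ which satisfies Zhikov's condition \eqref{ZC1}. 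Uniqueness for \eqref{Eq1} then transfers verbatim to \eqref{Eq2}, and \eqref{EnId} holds as well.

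First I would construct $A$ by the curl-of-Newtonian-potential formula \eqref{FR1}. After continuing $a$ across $\partial\Omega$ by the auxiliary Neumann problem described in the Introduction (so that the extension remains solenoidal and then is set to zero far out), a single differentiation of the potential controls each entry by a first-order Riesz potential,
\[
|A(x)|\leq C(n)\int_\Omega \frac{|a(y)|}{|x-y|^{n-1}}\,dy=C(n)\,I_\Omega|a|(x).
\]
If $\Omega$ carries holes the Hodge decomposition only yields $a=b+\mathrm{div}\,A$ with $b$ a harmonic, hence smooth and bounded, field on $\Omega$; a bounded solenoidal drift contributes $[u,u]_b=0$, so $b$ is harmless and does not affect \eqref{ZC1}. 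Thus I may assume $a=\mathrm{div}\,A$ with $A$ majorized by $I_\Omega|a|$.

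With this pointwise bound the two hypotheses are dispatched by the two potential estimates quoted before the theorem. For $a\in M^n(\Omega)$, estimate \eqref{MP1} gives, after taking the $q$-th root, $\|I_\Omega|a|\|_{L^q(\Omega)}\leq C(n)\,q\,\|a\|_{M^n(\Omega)}$ for large $q$, whence $\lim_{q\to\infty}q^{-1}\|A\|_{L^q(\Omega)}\leq C\|a\|_{M^n(\Omega)}<\infty$, i.e. \eqref{ZC1}, and Theorem~\ref{T1} applies. For hypothesis \eqref{C2} I would run \eqref{TI1} with $p=n-\varepsilon$ and a matched exponent $q$, choosing $\varepsilon=\varepsilon(q)$ (concretely $1/q\sim \varepsilon/(n(n-\varepsilon))$, i.e. $q\sim n^2/\varepsilon$) so that the gain $\mu-\delta$, $\delta=1/p-1/q$, stays comparable to $1/q$ and the prefactor $\left(\frac{1-\delta}{\mu-\delta}\right)^{1-\delta}$ grows only like $q^{1-1/n}$. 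Combining this with $\|a\|_{L^{n-\varepsilon}(\Omega)}\leq C\varepsilon^{-1/n}\sim C\,q^{1/n}$, supplied by \eqref{C2}, again produces $\|A\|_{L^q(\Omega)}\leq Cq$ and hence \eqref{ZC1}.

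The step I expect to be the main obstacle is exactly this balancing in the \eqref{C2} case: one must couple the Sobolev loss $\delta$ to the rate in \eqref{C2} and check that the blow-up of the constant in \eqref{TI1} as $\mu-\delta\to 0$ is precisely compensated by the controlled growth of $\|a\|_{L^{n-\varepsilon}(\Omega)}$, so that their product remains linear in $q$. A secondary technical nuisance is the clean global construction of $A$ with $\mathrm{div}\,A=a$ on a general Lipschitz domain (boundary flux and holes), but the Neumann continuation together with the boundedness of the harmonic remainder reduces everything to the contractible situation in which \eqref{FR1} applies directly.
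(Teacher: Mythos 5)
Your proposal is exactly the reduction-to-Theorem~\ref{T1} strategy that the paper itself discusses \emph{after} its proof of Theorem~\ref{T3} and deliberately avoids (``In the general case, this is also possible, but requires certain analytical work\dots In the proof of Theorem~\ref{T3} we circumvent these problems''). The parts of your argument that are purely about exponents are sound: in the Morrey case, \eqref{MP1} does give $\|I_\Omega|a|\|_{L^q(\Omega)}\le Cq\|a\|_{M^n(\Omega)}$, and in the \eqref{C2} case your coupling $1/q\sim \varepsilon/n^2$ makes the prefactor in \eqref{TI1} grow like $q^{1-1/n}$ while $\|a\|_{L^{n-\varepsilon}(\Omega)}\le C\varepsilon^{-1/n}\sim Cq^{1/n}$, so the product is indeed $O(q)$ and \eqref{ZC1} would follow. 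The identification $[u,\varphi]_A=[u,\varphi]_a$ under $\mathrm{div}\,A=a$ is also fine (mollify $u$, not $a$, and use skew-symmetry plus the distributional definition of $\mathrm{div}\,A$).

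The genuine gap is the step you dismiss as a ``secondary technical nuisance'': producing, on a general bounded Lipschitz domain, a skew-symmetric $A$ with $\mathrm{div}\,A=a$ and $|A|\le C\,I_\Omega|a|$ (or at least with \eqref{ZC1}). Formula \eqref{FR1} requires the zero extension of $a$ to remain solenoidal, i.e.\ vanishing normal trace. Your fix via the auxiliary Neumann problem fails at two points. First, for $a$ merely in $M^n(\Omega)\cap L^{2n/(n+2)}(\Omega)$ (or satisfying \eqref{C2}) with $\mathrm{div}\,a=0$ distributionally, the normal trace $a\cdot n$ on a Lipschitz boundary exists only in a very weak distributional sense, so even the formulation of the Neumann problem is delicate. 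Second, and decisively, after extending you must apply \eqref{MP1} or \eqref{TI1} to the \emph{extended} field $\tilde a$ on the larger ball, so you need $\|\tilde a\|_{M^n}$ (resp.\ the rate $\varepsilon^{1/n}\|\tilde a\|_{L^{n-\varepsilon}}$) controlled by the corresponding quantity for $a$; this is a Morrey/grand-Lebesgue gradient estimate up to the boundary for the Neumann solution on a Lipschitz domain, and no such estimate is available in general (gradient estimates for the Neumann problem on Lipschitz domains hold only in a restricted range of exponents). The same objection hits the harmonic remainder in your treatment of holes, whose construction again requires flux data. Without this step you never obtain \eqref{ZC1} for $A$, so Theorem~\ref{T1} cannot be invoked. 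The paper's actual proof never constructs $A$: it runs the Lipschitz truncation argument directly on \eqref{Eq2}, uses the pointwise Poincar\'e inequality \eqref{D2} to dominate $|u-u_\lambda|$ by the Riesz potential of $|\nabla(u-u_\lambda)|$ over the bad set $\{g>\lambda\}$, and only then applies \eqref{MP1}/\eqref{TI1} to $a$ inside $\Omega$, where no extension across $\partial\Omega$ is ever needed.
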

\begin{proof}
 Here we use the same notation as in the proof of Theorem~\ref{T1}. Let $u$ be a solution to \eqref{Eq2} with $f=0$ and $u_\lambda$ be the Lipschitz truncation of $u$. Using $u_\lambda$ as a test function in \eqref{IIvec}, we obtain
 \begin{equation}\label{D1}
 \begin{gathered}
 \int_{\{g\leq \lambda\}} |\nabla u|^2\, dx=-\int_{\Omega} au\nabla u_\lambda\, dx-\int_{\{g>\lambda\}} \nabla u\nabla u_\lambda\, dx\\
 =\int_{\{g>\lambda\}} (a(u_\lambda-u)-\nabla u)\nabla u_\lambda\, dx\leq C\lambda \int_{\{g>\lambda\}}( |a|\cdot |u-u_\lambda|+|\nabla u|)\, dx.
 \end{gathered}
 \end{equation}
For almost all $x\in \Omega$ by the Poincar\'e inequality \cite[Lemma 7.16]{GT} there holds
\begin{equation}\label{D2}
|u-u_\lambda|(x)\leq \frac{(\mathrm{diam}\, \Omega)^n}{|\{g\leq \lambda\}\cap \Omega|}\int_{\{g>\lambda\}}\frac{|\nabla (u-u_\lambda)|}{|x-y|^{n-1}}\, dy.
\end{equation}
Let $\lambda_0$ be such that $|\{g\leq\lambda_0\}\cap \Omega|>\delta (\mathrm{diam}\, \Omega)^n$, $\delta$ a small positive number. Let $V(x)= \int_\Omega |x-y|^{1-n}|a(x)|\, dx$. From \eqref{D1}, \eqref{D2}, for $\lambda\geq \lambda_0$ we have
\begin{multline*}
\int_{\{g\leq \lambda\}} |\nabla u|^2\,dx\leq  C\lambda\int_{\{g>\lambda\}}|\nabla u|\, dx+{}\\
{}+ C\lambda \int_{\{x\in\Omega:\, g(x)>\lambda\}} \int_{\{y\in\Omega:\, g(y)>\lambda\}} |a(x)|\cdot |x-y|^{1-n}\cdot (|\nabla u(y)|+\lambda)  \, dx\, dy\\
\leq C\lambda\int_{\{g>\lambda\}}|\nabla u|\, dx+\int_{\{g>\lambda\}\cap\Omega} (|\nabla u|+\lambda) V\, dx.
\end{multline*}
Multiply this relation by $\varepsilon \lambda^{-1-\varepsilon}$, $0<\varepsilon<1/2$, and integrate with respect to $\lambda$ from $\lambda_0$ to $+\infty$. Fubini's theorem yields
$$
\int_\Omega |\nabla u|^2 (\max (\lambda_0,g))^{-\varepsilon}\, dx \leq
C \varepsilon \int_\Omega (|\nabla u| g^{1-\varepsilon}+ g^{2-\varepsilon}) V\, dx+C\varepsilon \int_\Omega g^{1-\varepsilon}|\nabla u|\, dx.
$$
By H\"older's inequality,
\begin{equation}\label{pf1}
\int_\Omega |\nabla u|^2 (\lambda_0+g)^{-\varepsilon}\, dx \leq C \varepsilon \left[ \left(\int_\Omega g^2\, dx\right)^{\frac{2-\varepsilon}{2}} \left(\int_\Omega V^{2/\varepsilon}\, dx \right)^{\varepsilon/2} + \int_\Omega g^{1-\varepsilon}|\nabla u|\, dx\right].
\end{equation}
If $a\in M^n(\Omega)$, from \eqref{MP1} we obtain
$$
\left(\int_\Omega V(x)^{2/\varepsilon}\, dx \right)^{\varepsilon/2}\leq C (\mathrm{diam}\, \Omega)^{n\varepsilon/2} \frac{\|a\|_{M^n(\Omega)}}{\varepsilon/2}. 
$$
If $a$ satisfies \eqref{C2}, then using \eqref{TI1} with $p=2n/(2+\varepsilon)$ and $q=2/\varepsilon$  we have
$$
\left(\int_\Omega V(x)^{2/\varepsilon}\, dx \right)^{\varepsilon/2} \leq C \varepsilon^{(1-n)/n} |\Omega|^{\varepsilon(n-1)/2n} \|a\|_{L^{n-\varepsilon}(\Omega)}.
$$
Substituting this estimate into \eqref{pf1}, using the boundedness of the Hardy-Littlewood maximal function in $L^2$ and sending $\varepsilon\to 0$, we arrive at
$$
\int_\Omega |\nabla u|^2\, dx \leq C J \int_\Omega g^2 \, dx\leq C J \int_\Omega |\nabla u|^2\, dx
$$
where $J$ is either $ \|a\|_{M^n(\Omega)}$ or $\lim_{\varepsilon\to 0} \varepsilon^{1/2} \|a\|_{L^{n-\varepsilon}(\Omega)}$. Hence, $ \int_\Omega |\nabla u|^2\, dx=0$ provided that $J$ is sufficiently small. This assumption can be removed by the same argument as in the proof of Theorem~\ref{T1}.
\end{proof}
Theorem~\ref{T3} can be obtained as a corollary of Theorem~\ref{T1} if we find a suitable representation of a solution to $\mathrm{div}\, A=a$ in terms of integral potentials with kernels $K(x,y)=O(|x-y|^{1-n})$. In this case applying \eqref{MP1} (or \eqref{TI1}) we would obtain \eqref{ZC1} for $A$. In dimension $2$ this is easy. Also this is simple provided that the normal component of $a$ on $\partial \Omega$ is zero, in which case a solution is given by \eqref{FR1}. In the general case, this is also possible, but requires certain analytical work. Let $n=3$ and $a$ be a smooth solenoidal vector field. Let $\Omega$ be star-shaped with respect to a ball $B\subset \Omega$. For $y\in B$ and $x\in \Omega$ the function $w(x,y)=\int_0^1 \nabla a(y+t(x-y)) \times t (x-y)\, dt$ satisfies $\mathrm{rot}_x\, w=a$. Let $\psi \in C_0^\infty(B)$, $\int_\Omega \psi\, dx=1$. The function $W(x)=\int_B \psi(y) w(x,y)\, dy$ solves $\mathrm{rot} \,W =a$. Interchanging the order of integration, we arrive at 
$$
W(x)=\int_\Omega a(y) \times\frac{x-y}{|x-y|^3}  \int_{|x-y|}^{+\infty} \left(1-\frac{|x-y|}{r} \right)r^{2} \psi \left(x+r\frac{y-x}{|y-x|}\right)\, dr \, dy.
$$
There remains the task of checking the validity of this formula (say, in the spirit of \cite{L3}), and for domains of more complex geometry this is not directly applicable. In the proof of Theorem~\ref{T3} we circumvent these problems.

Condition \eqref{C2} means that $a$ is from the grand Lebesgue space $L^{n)}(\Omega)$ introduced by Iwaniec and Sbordone \cite{IS1}, which is the set of functions $f$ integrable to any power less than $n$ with the finite norm $\|f\|_{L^{n)}(\Omega)}=\sup_{1\leq s <n} \left((n-s) |\Omega|^{-1} \int_\Omega |f|^{s}\, dx \right)^{1/s}$.
Clearly, this condition is satisfied for $a$  from the Marcinkiewicz weak-$L^{n}(\Omega)$ space, i.e. $|\{|a|>t\}|\leq C t^{-n}$. The Orlicz space $L^n \log^{-1}L(\Omega)$ is also contained in $L^{n)}(\Omega)$, and $L^{n)}(\Omega) \subset L^{n}\log^\alpha L (\Omega)$ for all $\alpha<-1$. Further account of properties of grand Lebesgue spaces and their investigation by methods of interpolation theory can be found in \cite{FK}. The closure of $L^n(\Omega)$ in $L^{n)}(\Omega)$ is strictly less than the latter space and is characterized by $\limsup_{\varepsilon \to 0} \varepsilon^{1/n} \|f\|_{L^{n-\varepsilon}(\Omega)}=0$.


For a solenoidal vector field $a\in L^1(\Omega)$ one can easily construct an approximation solution of \eqref{Eq2} for bounded right-hand sides, $f\in L^\infty(\Omega)$ (or, say, $f=f_{i,x_i}+g$, $g\in L^{q/2}(\Omega)$, $f_i\in L^q(\Omega)$, $q>n$). This fact follows from the supremum estimate $\|u\|_{L^\infty(\Omega)}\leq C \|f\|_{L^\infty(\Omega)}$ which is valid for bounded solenoidal $a$ with the constant $C$ independent of $a$. Applying the same reasoning as in Theorem~\ref{T3}, one can prove the uniqueness of approximation solution of \eqref{Eq2} with $a$ from the Morrey space $M^n(\Omega)$ and the right-hand side $f$ from $L^\infty(\Omega)$ without requiring \eqref{AC}. Now, let $f$ be an arbitrary element of $W^{-1,2}(\Omega)$. It can be approximated by bounded $f_j$. Let $u_j$ be approximation solutions of \eqref{Eq2} corresponding to $f_j$. Since in this case approximation solutions are uniquely defined, the difference of any two approximation solutions is also an approximation solution, satisfying $\|u_j-u_k\|_{W_0^{1,2}(\Omega)}\leq C \|f_j-f_k\|_{W^{-1,2}(\Omega)}$. Therefore, the sequence $u_j$ has a strong limit $u$, which does not depend on the choice of approximation of $f$. It would be natural to call this limit a solution to \eqref{Eq2} corresponding to the right-hand side $f$. The limit function can be unbounded, so the term $au$ need not be integrable here. The question is how to understand the equation. For instance, using the Sobolev representation and Fubini's theorem, for bounded $u$ we can transform  the drift term in the integral identity as follows:
$$
\int_\Omega au\nabla \varphi\, dx=(n\omega_n)^{-1}\int_\Omega \nabla u(y)\, dy \int_\Omega \frac{ (x-y)a\nabla \varphi (x) }{|x-y|^n}\, dx, 
$$
which is well defined ($a\nabla \varphi\in M^n(\Omega)$ if $\nabla\varphi$ is bounded) and allows the passage to the limit with respect to the convergence of $\nabla u$ in $L^2(\Omega)$.

\textbf{Acknowledgements}: The work was partially supported by the Russian Foundation for Basic Research, project \textnumero 15-01-00471, and by the Ministry of Education and Science of the Russian Federation, research project \textnumero 1.3270.2017/4.6.
\small

\end{document}